\documentclass{article}
\usepackage[top=3cm, bottom=3cm, left=3cm, right=3cm]{geometry}
\usepackage{amsmath}
\usepackage{amsthm}
\usepackage{enumerate}
\newtheorem{definition}{Definition}
\newtheorem{remark}[definition]{Remark}
\newtheorem{example}[definition]{Example}
\newtheorem{theorem}{Theorem}
\newtheorem{conjecture}[theorem]{Conjecture}
\newtheorem{lemma}[theorem]{Lemma}
\newtheorem{corollary}[theorem]{Corollary}

\newcommand{\bandwidth}[1]{\ensuremath{\mathrm{band}\!\left(#1\right)}}
\newcommand{\reducemat}[1]{\ensuremath{\mathrm{RED}\!\left(#1\right)}}
\newcommand{\columnone}[2]{\ensuremath{\mathrm{col}_{#1}\!\left(#2\right)}}
\newcommand{\insertfix}[2]{\ensuremath{\mathrm{INS}_{#1}\!\left(#2\right)}}
\newcommand{\deletefix}[2]{\ensuremath{\mathrm{DEL}_{#1}\!\left(#2\right)}}
\newcommand{\essence}[1]{\ensuremath{\mathrm{ESS}\!\left(#1\right)}}
\newcommand{\tracemat}[1]{\ensuremath{\mathrm{tr}\!\left(#1\right)}}
\newcommand{\oomat}[1]{\ensuremath{\mathcal{OOM}\!\left(#1\right)}}

\newcommand{\numfactors}[2][]{\ensuremath{\mathrm{fact}_{#1}\!\left(#2\right)}}
\newcommand{\invertrows}[2]{\ensuremath{\mathrm{INV}_{#1}\!\left(#2\right)}}
\newsavebox{\rowvector}
\savebox{\rowvector}[1.6em]{row\hspace{-1.6em}\raisebox{1ex}{$\longrightarrow$}}
\newcommand{\rowvec}{\usebox{\rowvector}}
\newcommand{\rowindexed}[2]{\ensuremath{\rowvec_{#1}\!\left(#2\right)}}
\newsavebox{\colvector}
\savebox{\colvector}[1.1em]{col\hspace{-1.1em}\raisebox{1ex}{$\rightarrow$}}
\newcommand{\colvec}{\usebox{\colvector}}
\newcommand{\colindexed}[2]{\ensuremath{\colvec_{#1}\!\left(#2\right)}}
\newcommand{\unsignrow}[2]{\ensuremath{\mathrm{FIX}_{#1}\!\left(#2\right)}}
\newsavebox{\distvector}
\savebox{\distvector}[1.7em]{dist\hspace{-1.6em}\raisebox{1ex}{$\longrightarrow$}}
\newcommand{\distvec}{\usebox{\distvector}}
\newcommand{\disttable}[1]{\ensuremath{\distvec\!\left(#1\right)}}
\newcommand{\norm}[2][]{\ensuremath{\left\|#2\right\|_{#1}}}
\newcommand{\greedymat}[1]{\ensuremath{\mathcal{GBM}\!\left(#1\right)}}
\newcommand{\opfactors}[2][]{\ensuremath{\overline{\mathrm{fact}}_{#1}\!\left(#2\right)}}
	\title{Factoring Permutation Matrices Into a Product of Tridiagonal Matrices}
	\author{Michael Daniel Samson and Martianus Frederic Ezerman\\\normalsize{\texttt{s080067@mail.ntu.edu.sg, mart0005@mail.ntu.edu.sg}}}
\begin{document}
\allowdisplaybreaks
\maketitle
	\begin{abstract}
		Gilbert Strang posited \cite{Strang} that a permutation matrix of bandwidth $w$ can be written as a product of $N < 2w$ permutation matrices of bandwidth 1.  A proof employing a greedy ``parallel bubblesort'' algorithm on the rows of the permutation matrix is detailed and further points of interest are elaborated.
	\end{abstract}

\section{Conjecture and Outline}\label{conjover}
This section states the problem, starting with some necessary definitions.  As a convention, $M'$ will denote the transpose of a matrix $M$ and $I$ is the identity matrix.
\begin{definition}\label{permmatrix}
	An $n \times n$ \emph{permutation matrix} $P$ contains only 0s and 1s, with only one 1 per row and column.  The permutation of a column vector $\vec{x}$, where $\vec{x}' = [1\ 2\ \cdots\ n]$, is the column vector $P\vec{x}$.

	A matrix $M = [m_{ij}]$ is said to be \emph{of bandwidth $w$}, denoted by $\bandwidth{P} := w$, if $m_{ij} = 0$ whenever $|i - j| > w$.
\end{definition}
The value of $\bandwidth{M}$ is 0 if $M$ is diagonal, 1 if $M$ is tridiagonal and 2 if $M$ is pentadiagonal.

Gilbert Strang posed the following conjecture \cite{Strang}
:  
\begin{conjecture}[Strang]\label{strangconjmat}
	A finite permutation matrix of bandwidth $w > 0$ can be written as the product of at most $2w - 1$ bandwidth--1 permutation matrices.
\end{conjecture}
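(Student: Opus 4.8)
The plan is to prove the statement in its equivalent ``sorting'' form and to extract the factorisation from a greedy parallel bubblesort run on the rows of $P$. To an $n\times n$ permutation matrix $P$ attach the sequence $a(P)=(a_1,\dots ,a_n)$, where $a_i$ is the column of the unique $1$ in row $i$; then $\bandwidth{P}\le w$ means $|a_i-i|\le w$ for all $i$, the matrix $P$ equals $I$ iff $a_i=i$ for all $i$, and a bandwidth-$1$ permutation matrix $B$ is exactly one whose coordinates are permuted by a set of pairwise disjoint adjacent transpositions. Left-multiplying $P$ by such a $B$ permutes the entries of $a(P)$ by precisely those transpositions, and because $B^{-1}=B$, a factorisation $P=B_1B_2\cdots B_N$ is the same data as a sequence of $N$ such row operations carrying $a(P)$ to $(1,2,\dots ,n)$ (read the $B_k$ off in order). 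So it is enough to show that every sequence with $|a_i-i|\le w$ can be sorted to $(1,\dots ,n)$ in at most $2w-1$ rounds, a round being a set of disjoint adjacent transpositions applied simultaneously.

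Fix the algorithm: in each round pick a \emph{maximal} family of disjoint inverted adjacent pairs --- e.g. scan the boundaries $(1,2),(2,3),\dots$ from the left and swap at $(i,i+1)$ whenever $a_i>a_{i+1}$ and neither position has been used yet this round (any systematic tie-break is fine) --- and iterate, producing $P=P_0,P_1,P_2,\dots$. Two facts are easy. (a) \emph{The bandwidth never increases}: a swap at an inverted boundary $(i,i+1)$ alters only those two displacements, each by $\pm 1$, and a short check --- invoking $a_i>a_{i+1}$ together with the bound $|a_j-j|\le w$ exactly in the cases where a displacement would otherwise reach $w+1$ --- shows both remain $\le w$. (b) \emph{A matrix of bandwidth $\le 1$ is sorted in one round}: its $a(P)$ arises from $(1,\dots,n)$ by a set of disjoint adjacent transpositions, the inverted boundaries are precisely those transpositions, and the round performs all of them. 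The conjecture will follow from the key quantitative fact

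\begin{lemma}\label{twostep}
If $\bandwidth{P}=w\ge 2$, then $\bandwidth{P_2}\le w-1$.
\end{lemma}

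\noindent Granting it, induct on $w$ with the hypothesis that greedy bubblesort sorts every matrix of bandwidth $\le w$ within $2w-1$ rounds. For $w=1$ this is (b). For $w\ge2$ and $\bandwidth{P}\le w$: if $\bandwidth{P}\le w-1$ apply the hypothesis directly; otherwise Lemma~\ref{twostep} gives $\bandwidth{P_2}\le w-1$, the hypothesis sorts the greedy trajectory from $P_2$ within $2(w-1)-1$ further rounds, and by (a) nothing is ever undone, so $P_{2w-1}=I$; note $2+\bigl(2(w-1)-1\bigr)=2w-1$.

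The heart of the proof, and the step I expect to be the main obstacle, is Lemma~\ref{twostep}: one must show the greedy rounds cannot ``stall'' at a fixed bandwidth for three or more consecutive rounds. I would argue through the distance table $\disttable{P}$, the vector recording how many positions carry each displacement $a_i-i$, concentrating on its extreme entries (displacements $\pm w$). The mechanism is an erosion principle: a position holding displacement $-w$ that fails to move left in a round is blocked only by a left neighbour whose displacement is also $\le -(w-1)$ --- and symmetrically for $+w$ on the right --- so, tracking how the maximal runs of extreme positions shrink and merge across two rounds, and checking that the tie-break never manufactures a new displacement of absolute value $w$, one concludes $\bandwidth{P_2}\le w-1$. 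Capturing this as an invariant on $\disttable{P}$ that strictly decreases in a lexicographic order under each greedy round is, I expect, the cleanest packaging, and is where the real work lies; with it in hand the reduction, the two easy facts, and the bookkeeping $2(w-1)+1=2w-1$ are routine.
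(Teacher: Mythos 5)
Your approach is genuinely different from the paper's. Instead of the paper's progression through structural classes (tracefree settled Strang canonical $\to$ settled Strang canonical $\to$ general, via Theorems~\ref{solidrepthm},~\ref{solidrepessthm},~\ref{strangthm}, with the circulant as the extremal comparator), you attempt a clean induction on bandwidth hinging on a ``two rounds drops the bandwidth'' lemma. Unfortunately that lemma is false, and with it the inductive step collapses.

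Take $n=2w$ and $a(P)=(w+1,w+2,\dots,2w,1,2,\dots,w)$, the circulant shift by $w$ that the paper identifies as the tight case in Remark~\ref{eventight}; then $\bandwidth{P}=w$. In round one the \emph{only} inverted boundary is $(w,w+1)$, so there is no tie-break freedom and $a(P_1)=(w+1,\dots,2w-1,1,2w,2,\dots,w)$. In round two the inverted boundaries are exactly $(w-1,w)$ and $(w+1,w+2)$, again forced, and $a(P_2)=(w+1,\dots,2w-2,\,1,\,2w-1,\,2,\,2w,\,3,\dots,w)$. For every $w\ge 3$ the positions $1$ and $2w$ are untouched after two rounds and still carry displacement $\pm w$, so $\bandwidth{P_2}=w$, not $\le w-1$. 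Concretely for $w=3$: $(4,5,6,1,2,3)\to(4,5,1,6,2,3)\to(4,1,5,2,6,3)$, and $|4-1|=|3-6|=3$. In fact for this family the bandwidth stays at $w$ for $w$ full rounds before it first drops, so the failure worsens as $w$ grows.

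This is not a detail the ``erosion principle'' can patch: the quantity you are implicitly bounding --- rounds spent per unit of bandwidth reduction --- is genuinely not bounded by $2$ at every level. In the $w=3$ example the bandwidth history over the five rounds is $3,3,3,2,1,0$: three rounds at level $3$, then one round each at levels $2$ and $1$. The grand total $5=2w-1$ is right, but only because the later levels are cheap and compensate; your induction hypothesis charges a fixed $2(w-1)-1$ to the trajectory from $P_2$ onward, and when the first drop costs three rounds the sum becomes $2w$. So a per-level accounting cannot close, and you are forced into a global, amortized analysis of the whole trajectory --- which is precisely the content the paper supplies by reducing to the circulant extremal case (Remark~\ref{eventight}, Corollary~\ref{circbound}) and handling the exceptional neutral rows via overtaking (Theorems~\ref{solidrepessthm},~\ref{strangthm}). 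Your reduction to sequences, fact~(a) that the bandwidth never increases under a round of disjoint inverting swaps, and fact~(b) that bandwidth $\le 1$ is cleared in one round are all correct, but the whole quantitative burden of the theorem sits in the unproved --- and, as stated, unprovable --- Lemma~\ref{twostep}.
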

\begin{example}\label{examplep}
\begin{displaymath}
	P = 
	\begin{bmatrix}
		0 & 0 & 1 & 0\\
		1 & 0 & 0 & 0\\
		0 & 0 & 0 & 1\\
		0 & 1 & 0 & 0
	\end{bmatrix}
	=
	\begin{bmatrix}
		0 & 1 & 0 & 0\\
		1 & 0 & 0 & 0\\
		0 & 0 & 0 & 1\\
		0 & 0 & 1 & 0
	\end{bmatrix}
	\begin{bmatrix}
		1 & 0 & 0 & 0\\
		0 & 0 & 1 & 0\\
		0 & 1 & 0 & 0\\
		0 & 0 & 0 & 1
	\end{bmatrix} = R_1R_2.
\end{displaymath}
\end{example}

This paper aims to prove Conjecture~\ref{strangconjmat} and explore topics opened during the development of this proof.  The next three sections (Sections~\ref{canonperms},~\ref{overtake}, and \ref{proveconj}) 
will cover the proof of the conjecture, outlining a greedy ``parallel bubblesort''\cite{Strang} strategy to determine a factor per iteration,
starting from a specific key class of permutation matrices through progressively larger classes of permutation matrices.  Section~\ref{furtherpoints} concludes the paper with some points of further interest.

\section{Strang Canonical Matrices}\label{canonperms}
This section performs three tasks.  First, it outlines the general ``parallel bubblesort'' algorithm.  Second, it introduces two classes of permutation matrices: Strang canonical matrices and settled matrices.  Third, Theorem~\ref{solidrepthm}, the main result of this section, establishes that Conjecture~\ref{strangconjmat} holds for tracefree settled Strang canonical matrices with so-called reducing matrix factors.
\begin{definition}
	If $P$ is an $n \times n$ permutation matrix, then the $m$th row of $P$, $1 \leq m \leq n$ is $\rowindexed{m}{P}$, the $m$th column of $P$ is $\colindexed{m}{P}$ and $[\columnone{1}{P}\ \cdots\ \columnone{n}{P}] := (P\vec{x})'$ where $\vec{x}' = [1\ \cdots\ n]$.  
	
	If $1 \leq i < j \leq n$, $\rowindexed{i}{P}$ and $\rowindexed{j}{P}$ are an \emph{inverted pair} if $\columnone{i}{P} > \columnone{j}{P}$ and are a \emph{contented pair} 
	otherwise.
	
	If $P = \prod\limits_{k = 1}^m T_k$, where $T_k$ are bandwidth--1 permutation matrices, then $\numfactors{P} := m$.
\end{definition}
If an $n \times n$ permutation matrix $P$ represents a permutation $\sigma$, then $\columnone{i}{P} = \sigma(i)$ for $1 \leq i \leq n$, and each inverted pair of $P$ represents an inversion of $\sigma$.  Also, $\bandwidth{P} = \max_{1 \leq i \leq n}|\columnone{i}{P} - i|$.
\begin{remark}\label{existproduct}
	The bubblesort algorithm \cite[p.~40]{Algo} shows that each permutation $\sigma$ is the product of transposition of adjacent elements, $\sigma = \prod \tau_i$.  Let $P_\rho$ be the permutation matrix representing $\rho$.  Then $P_{\tau_i}$ are all bandwidth--1 permutation matrices, and $P_\sigma = \prod P_{\tau_i}$.  Thus, a permutation matrix can always be written as a product of bandwidth--1 permutation matrices.
	
	Let $P = \prod\limits_{k = 1}^m T_k$ be a permutation matrix where $T_k$ are bandwidth--1 permutation matrices.  Since permutation matrices are unitary and bandwidth--1 permutation matrices are inversions, $${T_k}^2 = I \text{ implies } T_k' = {T_k}^{-1} = T_k \text{ and } P^{-1} = \left(\prod_{k = 1}^m T_k\right)^{-1} = \prod_{k = 1}^m {T_{m + 1 - k}}^{-1} = \prod_{k = 1}^m T_{m + 1 - k} = P'.$$  Thus, $\numfactors{P} = \numfactors{P^{-1}}$.  Whenever such a product is defined, denote the indexed matrices $P_0 = P$ and $P_k = T_kP_{k - 1} = \left(\prod\limits_{i = 1}^k T_{k + 1 - i}\right)P$.
\end{remark}
\begin{lemma}\label{algoterm}
	If $P = P_0$ is a finite permutation matrix, and $P_k = B_kP_{k - 1}$ where $B_k$ is a nonidentity permutation matrix that performs swaps only on inverted pairs of $P_{k - 1}$, then there is a number $m$ such that $P_m = I$.
\end{lemma}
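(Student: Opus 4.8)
The plan is to track a single integer‑valued monovariant along the sequence $P_0, P_1, P_2, \dots$: the number of inverted pairs. For a permutation matrix $Q$ let $\mathrm{inv}(Q)$ denote the number of pairs $i<j$ with $\columnone{i}{Q} > \columnone{j}{Q}$; this is a nonnegative integer, at most $\binom{n}{2}$. The first observation I would record is that $\mathrm{inv}(Q)=0$ forces $Q=I$: having no inverted pair means $\columnone{1}{Q} < \columnone{2}{Q} < \cdots < \columnone{n}{Q}$, and the only strictly increasing rearrangement of $1,\dots,n$ is $1,\dots,n$ itself.

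Next I would prove the key monotonicity step: if $i<j$ is an inverted pair of a permutation matrix $Q$ and $Q^{\ast}$ is obtained from $Q$ by interchanging rows $i$ and $j$, then $\mathrm{inv}(Q^{\ast}) \le \mathrm{inv}(Q)-1$. This is a bookkeeping argument over positions $k \notin \{i,j\}$. A position $k$ with $k<i$ or $k>j$ is compared with the unordered pair $\{\columnone{i}{Q},\columnone{j}{Q}\}$ both before and after the swap, so its contribution is unchanged; the pair $(i,j)$ itself passes from inverted to contented, contributing $-1$; and each position $k$ with $i<k<j$ either leaves its joint contribution to the pairs $(i,k)$ and $(k,j)$ unchanged or, precisely when $\columnone{j}{Q} < \columnone{k}{Q} < \columnone{i}{Q}$, decreases it by $2$. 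Summing yields $\mathrm{inv}(Q^{\ast}) - \mathrm{inv}(Q) = -(2c+1) \le -1$, where $c$ counts those intermediate positions. (For $j=i+1$ this recovers the textbook fact that an adjacent inverted swap drops the inversion count by exactly one.)

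I would then lift this to the matrices $B_k$ of the statement. Reading ``performs swaps only on inverted pairs'' as: $B_k$ is a product of pairwise disjoint transpositions $(i_1\ j_1)\cdots(i_r\ j_r)$ with $r\ge 1$, each $(i_t\ j_t)$ (taking $i_t<j_t$) an inverted pair of $P_{k-1}$, I apply the transpositions one at a time, setting $Q^{(0)}=P_{k-1}$ and $Q^{(t)} = (i_t\ j_t)\,Q^{(t-1)}$. Because the transpositions are pairwise disjoint, rows $i_t$ and $j_t$ are untouched by the first $t-1$ swaps, so $(i_t\ j_t)$ is still an inverted pair of $Q^{(t-1)}$, and the previous paragraph gives $\mathrm{inv}(Q^{(t)}) \le \mathrm{inv}(Q^{(t-1)})-1$. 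Since disjoint row interchanges commute, $Q^{(r)} = B_k P_{k-1} = P_k$, and chaining the inequalities gives $\mathrm{inv}(P_k) \le \mathrm{inv}(P_{k-1}) - r \le \mathrm{inv}(P_{k-1}) - 1$.

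Finally, $\bigl(\mathrm{inv}(P_k)\bigr)_{k\ge 0}$ is a strictly decreasing sequence of nonnegative integers for as long as the recursion runs, so after at most $\mathrm{inv}(P_0) \le \binom{n}{2}$ steps it reaches the value $0$; at that index $m$ the first observation gives $P_m = I$ (and indeed the recursion can go no further, since $I$ has no inverted pair on which a nonidentity $B$ could act). The only genuine obstacle is the second paragraph — the careful case analysis showing that swapping a possibly non‑adjacent inverted pair strictly decreases the inversion count — together with the point that it is disjointness of the transpositions in $B_k$ that lets those individual decreases be chained; everything else is immediate.
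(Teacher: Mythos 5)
Your proof is correct and takes essentially the same approach as the paper's: treat the inversion count as a strictly decreasing, nonnegative integer monovariant along $P_0, P_1, \dots$ and conclude it must hit $0$, which forces the identity. The paper's version is a terse two‑line assertion that $B_k$ making some inverted pairs contented drops the count; you fill in the nontrivial part it elides, namely the $-(2c+1)$ bookkeeping for a possibly non‑adjacent inverted swap and the use of disjointness of the transpositions in $B_k$ to chain those individual decreases.
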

\begin{proof}
	Since $B_k$ makes some inverted pairs of $P_{k - 1}$ contented, the number of inverted pairs of $P_k$ is less than that of $P_{k - 1}$.  Since the number of inverted pairs of $P$ is finite, there must be a number $m$ such that $P_m = I$.
\end{proof}
When $B_k$ only swaps adjacent rows, Lemma~\ref{algoterm} describes ``parallel bubblesorting'', as each bubblesort iteration reduces the number of inversions of a permutation, and $P = \prod\limits_{k = 1}^m B_k$ is the required decomposition.  The rest of the paper investigates the greedy selection operation 
$\{B_k\}$ to ensure that $m = \numfactors{P} < 2w$ where $w = \bandwidth{P}$.

Given $P = \prod\limits_{k = 1}^m B_k$ as in Lemma~\ref{algoterm}, for $0 \leq i, j \leq m$, $\rowindexed{m_i}{P_i} = \rowindexed{m_j}{P_j}$ if $\columnone{m_i}{P_i} = \columnone{m_j}{P_j}$.
\begin{definition}
	Treating $P$ as a block diagonal matrix with the finest partition, each diagonal matrix is called a \emph{section} of $P$.  A $1 \times 1$ section is \emph{trivial}.
	
	A row of a permutation matrix is said to be \emph{positive} (\emph{negative}, \emph{neutral}) if the 1 is to the right of (to the left of, on, respectively) the diagonal.  A column of a permutation matrix is said to be \emph{positive} (\emph{negative}, \emph{neutral}) if the 1 is above (below, on, respectively) the diagonal.  
\end{definition}
Each section $S_1, \dots, S_k$ of a permutation matrix $P$ is a permutation matrix, and $\numfactors{P} = \max\{\numfactors{S_1}, \dots, \numfactors{S_k}\}$.
\begin{figure}
	\begin{center}
		\begin{picture}(55,55)(-30,-30)
			\put(-20,-20){\makebox(0,0)[c]{$0$}}
			\put(-20,-10){\makebox(0,0)[c]{$0$}}
			\put(-20,0){\makebox(0,0)[c]{$1$}}
			\put(-20,10){\makebox(0,0)[c]{$0$}}
			\put(-10,-20){\makebox(0,0)[c]{$1$}}
			\put(-10,-10){\makebox(0,0)[c]{$0$}}
			\put(-10,0){\makebox(0,0)[c]{$0$}}
			\put(-10,10){\makebox(0,0)[c]{$0$}}
			\put(0,-20){\makebox(0,0)[c]{$0$}}
			\put(0,-10){\makebox(0,0)[c]{$0$}}
			\put(0,0){\makebox(0,0)[c]{$0$}}
			\put(0,10){\makebox(0,0)[c]{$1$}}
			\put(10,-20){\makebox(0,0)[c]{$0$}}
			\put(10,-10){\makebox(0,0)[c]{$1$}}
			\put(10,0){\makebox(0,0)[c]{$0$}}
			\put(10,10){\makebox(0,0)[c]{$0$}}
			\put(20,-20){\makebox(0,0)[c]{$-$}}
			\put(20,-10){\makebox(0,0)[c]{$+$}}
			\put(20,0){\makebox(0,0)[c]{$-$}}
			\put(20,10){\makebox(0,0)[c]{$+$}}
			\put(-20,20){\makebox(0,0)[c]{$-$}}
			\put(-10,20){\makebox(0,0)[c]{$-$}}
			\put(0,20){\makebox(0,0)[c]{$+$}}
			\put(10,20){\makebox(0,0)[c]{$+$}}
			\multiput(-25,-25)(40,0){2}{\line(0,1){40}}
			\multiput(-25,-25)(37,0){2}{\multiput(0,0)(0,40){2}{\line(1,0){3}}}
		\end{picture}
		\caption{Signs of rows and columns of $P$}
		\label{chargetargetex}
	\end{center}
\end{figure}
Each nontrivial section has a positive top row, a negative bottom row, a negative leftmost column and a positive rightmost column.  
\begin{remark}\label{signcount}
	Each 1 on the diagonal of a permutation matrix $P$ determines a neutral row and column.  Observe that the number of 1s in the upper triangle of $P$ is the sum of the number of its positive and neutral rows, and the sum of the number of its positive and neutral columns.  Thus, $P$ has the same number of positive rows and columns.  Observing the number of 1s in the lower triangle of $P$ similarly shows that $P$ has the same number of negative rows and columns.
\end{remark}
\begin{definition}
	Two permutation matrices are \emph{row-sign-equivalent} (\emph{column-sign-equivalent}) if the signs of their rows (columns) are the same.

	A section is \emph{row-settled} if all of its positive rows are above its negative rows, \emph{column-settled} if all of its negative columns are to the left of its positive columns and \emph{settled} if it is either row-settled or column-settled.  
	
	A row-settled (column-settled) matrix has only row-settled (column-settled) sections, and a settled matrix is either row-settled or column-settled.
	
	A section is \emph{upper-canonical} (\emph{lower-canonical}), or in \emph{upper-canonical form} (\emph{lower-canonical form}), if its positive (negative) rows are pairwise contented.  A section is \emph{Strang canonical} (\emph{half-canonical}), or in \emph{Strang canonical form} (\emph{half-canonical form}), if it is in both (either) upper-canonical and (or) lower-canonical form.
	
  A permutation matrix is upper-canonical (lower-canonical), or in upper-canonical form (in lower-canonical form) if all of its sections are in upper-canonical (lower-canonical) form.  A permutation matrix is Strang canonical (half-canonical), or in Strang canonical (half-canonical) form, if it is in both (either) upper-canonical and (or) lower-canonical form.
\end{definition}
The inverse of a row-settled matrix is a column-settled matrix, and vice-versa.  The inverse of an upper-canonical matrix is a lower-canonical matrix, and vice versa.  

If $U$ is an upper-canonical matrix and the 1s in its upper triangle excluding its diagonal are in rows $p_1, \dots, p_k$ such that $p_i < p_{i + 1}$, then $\columnone{p_i}{U} < \columnone{p_{i + 1}}{U}$ for $1 \leq i < k$.  If $L$ is a lower-canonical matrix and the 1s in its lower triangle excluding its diagonal are in rows $n_1, \dots, n_\ell$ such that $n_i < n_{i + 1}$, then $\columnone{n_i}{L} < \columnone{n_{i + 1}}{L}$ for $1 \leq i < \ell$.
\begin{example}
	The only $n \times n$, $n \leq 4$, sections that are not in Strang canonical form are
	\begin{displaymath}
		\begin{bmatrix}
			0 & 0 & 1 & 0\\
			0 & 0 & 0 & 1\\
			0 & 1 & 0 & 0\\
			1 & 0 & 0 & 0
		\end{bmatrix},
		\begin{bmatrix}
			0 & 0 & 0 & 1\\
			0 & 0 & 1 & 0\\
			1 & 0 & 0 & 0\\
			0 & 1 & 0 & 0
		\end{bmatrix} \text{ and }
		\begin{bmatrix}
			0 & 0 & 0 & 1\\
			0 & 0 & 1 & 0\\
			0 & 1 & 0 & 0\\
			1 & 0 & 0 & 0
		\end{bmatrix}.
	\end{displaymath}
	The eight $4 \times 4$ tracefree settled sections are the above three matrices and
	\begin{displaymath}
		\begin{bmatrix}
			0 & 1 & 0 & 0\\
			0 & 0 & 1 & 0\\
			0 & 0 & 0 & 1\\
			1 & 0 & 0 & 0
		\end{bmatrix},
		\begin{bmatrix}
			0 & 0 & 1 & 0\\
			0 & 0 & 0 & 1\\
			1 & 0 & 0 & 0\\
			0 & 1 & 0 & 0
		\end{bmatrix},
		\begin{bmatrix}
			0 & 1 & 0 & 0\\
			0 & 0 & 0 & 1\\
			1 & 0 & 0 & 0\\
			0 & 0 & 1 & 0
		\end{bmatrix},
		\begin{bmatrix}
			0 & 0 & 1 & 0\\
			1 & 0 & 0 & 0\\
			0 & 0 & 0 & 1\\
			0 & 1 & 0 & 0
		\end{bmatrix} \text{ and }
		\begin{bmatrix}
			0 & 0 & 0 & 1\\
			1 & 0 & 0 & 0\\
			0 & 1 & 0 & 0\\
			0 & 0 & 1 & 0
		\end{bmatrix}.
	\end{displaymath}
\end{example}
\begin{remark}\label{uniqcanonsign}
	A Strang canonical matrix is uniquely determined by the signs of its rows and columns.  The $m$th 1 on its diagonal is at the intersection of its $m$th neutral row and $m$th neutral column.  The $m$th 1 in its upper triangle, excluding the diagonal, is at the intersection of its $m$th positive row and $m$th positive column.  The $m$th 1 in its lower triangle, excluding the diagonal, is at the intersection of its $m$th negative row and $m$th negative column.
\end{remark}
\begin{definition}
	A \emph{reducing swap} $\hat{R}$ of a permutation matrix $P$ is an elementary matrix that swaps adjacent rows of $P$ where the upper row is positive and the lower row is negative.  The pair of rows are \emph{reduced} by the swap.  The \emph{reducing matrix} $R$ of permutation matrix $P$ is $R = \prod \hat{R}$ over all possible reducing swaps $\hat{R}$ of $P$, and the \emph{reduction} of $P$ is $\reducemat{P} := RP$.
\end{definition}
Each reducing swap makes an inverted pair contented, and if $R$ is the reducing matrix of a nonidentity permutation matrix, $\bandwidth{R} = 1$.
\begin{example}
	In Example~\ref{examplep}, $P = R_1R_2$, where $R_1$ is the reducing matrix of $P$ and $\reducemat{P} = P_1 = R_1P = R_2$.
\end{example}
\begin{lemma}\label{canonderangeredlemma}
	If $P$ is a Strang canonical matrix whose nontrivial sections are tracefree, then $P = \prod\limits_{k = 1}^m R_k$, where $R_k$ is the reducing matrix of $P_{k - 1}$.
\end{lemma}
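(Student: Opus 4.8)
The plan is to run an induction on the number of inversions, powered by Lemma~\ref{algoterm}, with a structural invariant carrying the load. First I would record two facts about the reducing matrix $R$ of a permutation matrix $Q$. If $\swap{r}{r+1}$ and $\swap{r+1}{r+2}$ were both reducing swaps of $Q$, then $\rowindexed{r+1}{Q}$ would be both negative (the lower row of the first) and positive (the upper row of the second); so the reducing swaps of $Q$ act on pairwise disjoint pairs of adjacent rows, making $R$ a product of commuting transpositions of adjacent rows, whence $R = R^{-1}$ and $\bandwidth{R} \le 1$. Moreover, as already observed, a reduced pair is an inverted pair: if $\rowindexed{r}{Q}$ is positive and $\rowindexed{r+1}{Q}$ negative, then $\columnone{r}{Q} > r \ge \columnone{r+1}{Q}$. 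Hence $R$ satisfies the hypotheses placed on the $B_k$ in Lemma~\ref{algoterm}, provided $R \ne I$.

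The crux is the invariant: \emph{if $Q$ is a Strang canonical matrix whose nontrivial sections are tracefree, then so is $\reducemat{Q}$.} Granting this, the lemma follows quickly. Set $P_0 = P$ and let $R_k$ be the reducing matrix of $P_{k-1}$; the invariant makes every $P_k = R_k P_{k-1}$ Strang canonical with tracefree nontrivial sections, so whenever $P_{k-1} \ne I$ it has a nontrivial — hence tracefree — section $S$, and the row of $S$ immediately above its topmost negative row is positive, giving an adjacent positive-then-negative pair, i.e.\ a reducing swap, so $R_k \ne I$. By the first paragraph and Lemma~\ref{algoterm}, $P_m = I$ for some $m$; then $R_m \cdots R_1 P = I$, and since $R_k^{-1} = R_k$ we get $P = R_1 \cdots R_m = \prod_{k=1}^m R_k$.

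To prove the invariant, let $\sigma$ be the permutation with $\columnone{i}{Q} = \sigma(i)$ and let $\pi$ be the product of the transpositions $\swap{r}{r+1}$ over the reducing swaps $r$ of $Q$, so that $\reducemat{Q}$ represents $\sigma\pi$; applying $\pi$ slides each positive row taking part in a reducing swap one position down and the negative row beneath it one position up, fixing everything else. For the ``Strang canonical'' part I would track the positive rows: in $Q$ they occupy increasing positions with increasing $\columnone{\cdot}{Q}$-values (upper-canonical), and after the slide the surviving positive rows of $\reducemat{Q}$ keep those same columns and still occupy increasing positions — the only collision to worry about, a positive row sliding down onto the next positive row, cannot happen because the row immediately below a reducing swap is negative, not positive. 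The dual bookkeeping on negative rows shows $\reducemat{Q}$ is lower-canonical as well, so every section of $\reducemat{Q}$ is Strang canonical.

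For the ``tracefree'' part, note $(\sigma\pi)(k) = k$ only if (i) $k$ was untouched with $\sigma(k) = k$ — then $\{k\}$ was already a trivial section; (ii) a reducing swap occurs at $\swap{k}{k+1}$ with $\sigma(k+1) = k$; or (iii) a reducing swap occurs at $\swap{k-1}{k}$ with $\sigma(k-1) = k$. I claim that in case (ii), $k$ is the \emph{top} row of its section $S^\circ = [u,v]$ in $Q$ (and dually in case (iii) it is the \emph{bottom} row), so that $\sigma\pi$ fixes $[1,k-1]$ and $\{k\}$ splits off as a trivial section of $\reducemat{Q}$; applied to every fixed point of $\sigma\pi$ this gives that all nontrivial sections of $\reducemat{Q}$ are tracefree. \textbf{This claim is the step I expect to be the main obstacle.} To prove it, suppose $u < k$. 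Since $[u,v]$ is a section, $\sigma$ is indecomposable on it, so some $i \in [u,k-1]$ has $\sigma(i) \ge k$; as the value $k$ already sits at position $k+1$, in fact $\sigma(i) \ge k+1$, so positions $i$ and $k$ give two positive rows at positions $\le k$ whose columns lie in $\{k+1,\dots,v\}$. Counting how $\sigma$ distributes the $v-k$ values $\{k+1,\dots,v\}$ between positions $\le k$ and positions $> k$ then forces some position in $[k+2,v]$ to carry a column $< k$; being a negative row, this contradicts that negative rows have increasing columns while the negative row at position $k+1$ already has column $k$. Hence $u = k$. (If $\sigma(k+1)=k$ and $\sigma(k)=k+1$ hold at once, $\sigma$ fixes $\{k,k+1\}$, so $S^\circ = \{k,k+1\}$ and this $2\times 2$ swap is reduced to $I$, again leaving trivial sections.)
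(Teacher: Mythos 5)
Your proof is correct, and it shares the paper's overall plan: establish that the class of Strang canonical matrices with tracefree nontrivial sections is closed under reduction, then invoke Lemma~\ref{algoterm} to terminate. You are also right to flag the ``tracefree'' preservation as the only nontrivial step, and right that a new fixed point must lie at the boundary of its section. What differs is how that is argued. The paper never mentions sections or indecomposability; it partitions $P_{k-1}$ into blocks $A_{k-1}, B_{k-1}, C_{k-1}, D_{k-1}$ split just below the offending row and uses upper-canonicity of $P_{k-1}$ (the positive $\rowindexed{m-1}{P_{k-1}}$ with $\columnone{m-1}{P_{k-1}}=m$ dominates every positive row above it) together with the trivial column bounds on negative and neutral rows to show $B_{k-1}=C_{k-1}=0$, hence $B_k=C_k=0$, directly. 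Your argument instead supposes $u<k$, uses indecomposability of the section $[u,v]$ to find a second positive row below $u$ with column $\ge k+1$, and runs a pigeonhole count to manufacture an inversion among negative rows, contradicting \emph{lower}-canonicity. Both arguments are sound and establish the same boundary fact (yours in the case dual to the one the paper writes out); the paper's is a shade more direct --- no contradiction, no count --- while yours is self-contained without setting up block coordinates. Two small cosmetic remarks: ``$\sigma\pi$ fixes $[1,k-1]$'' should read ``stabilizes'' (maps the interval to itself), not ``fixes'' pointwise; and it is worth noting explicitly that no reducing swap straddles the boundary at $k-1,k$ (because $\rowindexed{k}{Q}$ is positive), so that $[1,k-1]$ is indeed still a union of sections of $\reducemat{Q}$.
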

\begin{proof}\label{canonderangeredproof}
	The permutation matrix $P$ and $\reducemat{P}$ have the same Strang canonicity, since reducing swaps only exchange the positions of a positive row and a negative row, so each $P_k$ is Strang canonical.
	
	Since the nontrivial sections of $P = P_0$ are tracefree, the only neutral rows of $P$ are in trivial sections.  Let $$P_k = \begin{bmatrix}A_k & 0 & B_k\\0 & 1 & 0\\C_k & 0 & D_k\end{bmatrix}, k > 0 \text{ and } P_{k - 1} = \begin{bmatrix}A_{k - 1} & B_{k - 1}\\C_{k - 1} & D_{k - 1}\end{bmatrix}.$$  The indicated neutral row of $P_k$ is in a trivial section if $B_k$ and $C_k$ are zero matrices.  Let $A_k$ be $(m - 1) \times (m - 1)$ and $\rowindexed{m}{P_{k - 1}}$ be signed.  
	
	If $\rowindexed{m}{P_{k - 1}}$ is negative, let $A_{k - 1}$ be $m \times m$.  Since $P_{k - 1}$ is Strang canonical, every negative $\rowindexed{m'}{P_{k - 1}}$ with $m' < m$ has its 1 in $A_{k - 1}$.  Then $\rowindexed{m - 1}{P_{k - 1}}$ is positive with $\columnone{m - 1}{P_{k - 1}} = m$ and every positive $\rowindexed{m'}{P_{k - 1}}$ with $m' < m$ has its 1 in $A_{k - 1}$.  Since each neutral $\rowindexed{m'}{P_{k - 1}}$ with $m' < m$ has its 1 in $A_{k - 1}$, both $B_{k - 1}$ and $C_{k - 1}$ are zero matrices, so $B_k$ and $C_k$ are zero matrices.  
	
	If $\rowindexed{m}{P_{k - 1}}$ is positive, let $A_{k - 1}$ be $(m - 1) \times (m - 1)$, and it similarly follows that $B_{k - 1}$, $C_{k - 1}$, $B_k$ and $C_k$ are zero matrices.
	
	Thus, $P$ and $\reducemat{P}$ are Strang canonical matrices with tracefree nontrivial sections, and the conclusion follows from Lemma~\ref{algoterm}.
\end{proof}
\begin{theorem}\label{solidrepthm}
	A tracefree settled Strang canonical matrix of bandwidth $w$ can be written as the product of less than $2w$ bandwidth--1 matrices.
\end{theorem}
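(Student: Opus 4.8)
The plan is to take the reducing factorisation $P=\prod_{k=1}^{m}R_{k}$ furnished by Lemma~\ref{canonderangeredlemma} and to bound its length $m$ by tracking how long each $1$ takes to return to the diagonal under the iteration $P_{k}=R_{k}P_{k-1}$, noting throughout that $P_{m}=I$ and that every $P_{k}$ is again Strang canonical with tracefree nontrivial sections. First I would reduce to a single nontrivial section: since $\numfactors{P}=\max_{S}\numfactors{S}$ over sections, every nontrivial section of a tracefree settled Strang canonical matrix is again such a matrix of no larger bandwidth (inheriting the settled type), trivial sections contribute $0$, and passing to the transpose interchanges row-settled with column-settled while preserving the bandwidth and, by Remark~\ref{existproduct}, the factor count. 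So assume $P$ is an $n\times n$, single nontrivial, tracefree, row-settled Strang canonical matrix whose positive rows are exactly $1,\dots,p$. Upper- and lower-canonicity give $\columnone{1}{P}<\cdots<\columnone{p}{P}$ and $\columnone{p+1}{P}<\cdots<\columnone{n}{P}$; since the leftmost (rightmost) column of a nontrivial section is negative (positive), this forces $\columnone{p+1}{P}=1$ and $\columnone{p}{P}=n$. The $1$s in columns $1$ and $n$ are therefore displaced from the diagonal by $p$ and $n-p$, so $p\le w$ and $n-p\le w$.

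Each $R_{k}$ swaps, simultaneously, every adjacent pair formed by a positive row immediately above a negative row (these pairs are automatically disjoint), and once a $1$ sits on the diagonal its row is never swapped again. Because $P_{m}=I$, no positive row is ever immediately above a neutral row and no neutral row immediately above a negative row: otherwise the $1$ in that positive (negative) row could never move past the permanently frozen diagonal $1$ below (above) it. Strang canonicity keeps the $1$s still above the diagonal in increasing column order going down the rows, so at every stage their columns form a subset of $\{\columnone{1}{P},\dots,\columnone{p}{P}\}$ listed in that order from the top row down. In particular the $1$ of column $n$, which starts in row $p$, is always the lowest such $1$, so it always has a negative row directly beneath it and descends one row per iteration, reaching its diagonal position after exactly $n-p$ iterations.

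For $i<p$ I would follow the $1$ of column $c_{i}:=\columnone{i}{P}$, starting in row $i$, together with the $1$ of column $c_{i+1}:=\columnone{i+1}{P}$ directly below it. While both lie above the diagonal, the second is the above-diagonal $1$ in the row just below the first, so the first descends in a given iteration exactly when the two are at least two rows apart; a short case check then shows that the vertical gap between them, initially $1$, stays in $\{1,2\}$. Hence, once the $1$ of column $c_{i+1}$ reaches the diagonal, say at iteration $\tau_{i+1}$, the $1$ of column $c_{i}$ is at most two rows above its diagonal position, and from then on descends every iteration — no above-diagonal $1$ can occupy a column strictly between $c_{i}$ and $c_{i+1}$, so the row directly below it is negative — giving $\tau_{i}\le\tau_{i+1}+1$. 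Chaining downward from $i=p$ yields $\tau_{i}\le(n-p)+(p-i)\le w+(w-1)=2w-1$ for all $i$. Finally, once every $c_{1},\dots,c_{p}$ has reached the diagonal, $P_{k}$ has no positive rows, so each of its $1$s lies on or below the diagonal and hence $P_{k}=I$ (a lower-triangular permutation matrix is the identity); thus $m=\max_{i}\tau_{i}\le 2w-1<2w$.

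The step requiring genuine care is the one in the third paragraph: verifying that the gap between the two tracked $1$s never exceeds $2$, and that after the lower one freezes the upper one is never blocked again. Both rest on the ``no positive-above-neutral, no neutral-above-negative'' adjacency fact (itself forced by termination) together with the Strang-canonical constraint that no above-diagonal $1$ lies between columns $c_{i}$ and $c_{i+1}$; packaging these into the recursion $\tau_{i}\le\tau_{i+1}+1$ is the part that is fiddly rather than deep.
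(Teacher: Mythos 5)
Your argument is correct and follows essentially the same route as the paper: pass to a single tracefree row-settled Strang canonical section, invoke the reducing-matrix factorisation of Lemma~\ref{canonderangeredlemma}, and bound the number of iterations for each positive row by its delay behind the positive rows below it plus its displacement to the diagonal, with at most $w-1$ rows of delay and at most $w$ of displacement. Your gap-tracking recursion $\tau_i\le\tau_{i+1}+1$ in fact supplies the detail behind the paper's one-line assertion that ``once a row is reduced, it is reduced by the next reducing matrix, until it is in a trivial section''; one small correction is that at time $\tau_{i+1}$ the $1$ of column $c_i$, if not already settled, sits exactly \emph{one} row above its target row $c_i$ (not two), since the gap of at most $2$ together with $c_i<c_{i+1}$ forces $c_i=c_{i+1}-1$ and $\rho_i=c_i-1$, which is what makes the ``$+1$'' in the recursion come out as claimed.
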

\begin{proof}
	Let $P$ be a row-settled Strang canonical matrix with $\tracemat{P} = 0$ and $\bandwidth{P} = w$.  Thus, it has an $n \times n$ row-settled Strang canonical section $S$ with $\tracemat{S} = 0$ and $\bandwidth{S} = w$.  Let $\columnone{m}{S} = n$.  Then the upper $m$ rows of $S$ are positive, and the rest are negative, with $\columnone{m + 1}{S} = 1$.  From Lemma~\ref{canonderangeredlemma}, a row-settled Strang canonical section is the product of reducing matrices: 
	once a row is reduced, it is reduced by the next reducing matrix, until it is in a trivial section.  Hence, 
	\begin{equation}
		\numfactors{S} = \max_{1 \leq i \leq m} \{\columnone{i}{S} + m - 2 * i\} = \max_{m < i \leq n} \{2 * i - m - 1 - \columnone{i}{S}\}\label{settledcount}\tag{*}
	\end{equation}
	$|\columnone{i}{S} - i| \leq w$ indicates the \emph{swap count}, which is the number of reducing swaps for $\rowindexed{i}{S}$ to be placed in the $\columnone{i}{S}$th row, and $m - i$, if $i \leq m$, ($i - (m + 1)$, if $i > m$) is the \emph{delay count}, which is the number of positive (negative) rows that must be reduced before the positive (negative) $\rowindexed{i}{S}$ is first reduced.

	Since $\columnone{m}{S} = n$, $S$ has $n - m = \columnone{m}{S} - m \leq w$ negative rows; since $\columnone{m + 1}{S} = 1$, $S$ has $m = m + 1 - \columnone{m + 1}{S} \leq w$ positive rows.  Thus, if $\bandwidth{S} = w$, then $\numfactors{S} \leq w - 1 + w = 2w - 1$.
	
	Since a column-settled Strang canonical matrix $P$ is the inverse of a row-settled Strang canonical matrix $P^{-1}$ and, by Remark~\ref{existproduct}, $\numfactors{P} = \numfactors{P^{-1}}$, the conclusion follows.
\end{proof}
\begin{remark}\label{eventight}
	If $C$ is an $n \times n$ circulant nonidentity permutation matrix, then it is Strang canonical, row-settled and column-settled, with $\tracemat{C} = 0$ and $\numfactors{C} = n - 1$, all reducing matrices.  Moreover, for some $k$, $1 < k \leq n$, it follows that $\columnone{m}{C} = ((k + m - 2) \bmod n) + 1$, $$\bandwidth{C} = \max\{\columnone{1}{C} - 1, n - \columnone{n}{C}\} = \left\{\begin{array}{cl}k - 1, & \text{ if } k - 1 \geq n / 2\\n - k + 1, & \text{ if } k - 1 \leq n / 2\end{array}\right.,$$ $\colindexed{k}{C}$ is its first positive column and $\rowindexed{n - k + 2}{C}$ is its first negative row.  $\numfactors{C}$ is tight with Strang's bound if $n = 2w$ and $w = \bandwidth{C}$.
\end{remark}
\begin{corollary}\label{circbound}
	Let $C$ be a circulant matrix, $R$ be a row-settled Strang canonical matrix which is row-sign-equivalent to $C$ and $P$ be a column-settled Strang canonical matrix which is column-sign-equivalent to $C$.  Then $\numfactors{C} \geq \numfactors{R}, \numfactors{P}$.
\end{corollary}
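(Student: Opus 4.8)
The plan is to reduce both inequalities to the exact count in Remark~\ref{eventight}, which says that an $n\times n$ nonidentity circulant permutation matrix $C$ has $\numfactors{C}=n-1$; it then suffices to prove $\numfactors{R}\le n-1$ and $\numfactors{P}\le n-1$. First I would dispose of the case $C=I$: then every row and column of $C$ is neutral, so $I$ is the only matrix that is row-sign-equivalent (resp.\ column-sign-equivalent) to $C$, and $\numfactors{R}=\numfactors{P}=0=\numfactors{C}$. So from now on assume $C$ is an $n\times n$ nonidentity circulant permutation matrix; by Remark~\ref{eventight} it has $m$ positive rows (namely rows $1,\dots,m$) followed by $n-m$ negative rows, with $0<m<n$, and $\numfactors{C}=n-1$.

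To bound $\numfactors{R}$: row-sign-equivalence with $C$ makes rows $1,\dots,m$ of $R$ positive and rows $m+1,\dots,n$ negative, so $R$ has no neutral row, hence $\tracemat{R}=0$. Since $R$ is row-settled and Strang canonical, $\columnone{1}{R}<\cdots<\columnone{m}{R}$ with $\columnone{i}{R}>i$ for $i\le m$, and $\columnone{m+1}{R}<\cdots<\columnone{n}{R}$ with $\columnone{i}{R}<i$ for $i>m$. The column value $n$ cannot occur in a negative row, so it occurs in a positive one, and as the positive column values increase this forces $\columnone{m}{R}=n$. I would then check that $R$ is a single $n\times n$ section: a section boundary after a row $j<m$ would give $\columnone{j}{R}\le j$, contradicting $\columnone{j}{R}>j$; a section boundary after a row $j$ with $m\le j<n$ would give $\columnone{m}{R}\le j<n$, contradicting $\columnone{m}{R}=n$. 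Hence the count established in the proof of Theorem~\ref{solidrepthm} applies to $R$, namely \[\numfactors{R}=\max_{1\le i\le m}\{\columnone{i}{R}+m-2i\}.\] For each $i\le m$ the $m-i+1$ distinct integers $\columnone{i}{R}<\cdots<\columnone{m}{R}\le n$ force $\columnone{i}{R}\le n-(m-i)$, so $\columnone{i}{R}+m-2i\le n-i\le n-1$; therefore $\numfactors{R}\le n-1=\numfactors{C}$.

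To bound $\numfactors{P}$, I would transpose back into the previous case. The inverse $P^{-1}$ of a column-settled Strang canonical matrix is again row-settled and Strang canonical. Transposing a permutation matrix sends the sign of its $j$th column to the opposite sign in the $j$th row of the result (neutral to neutral), so the $j$th row sign of $P^{-1}$ is the negation of the $j$th column sign of $P$, which equals that of $C$, which is the negation of the $j$th row sign of $C^{-1}$. Thus $P^{-1}$ is row-sign-equivalent to $C^{-1}$, itself an $n\times n$ nonidentity circulant permutation matrix, so the previous paragraph gives $\numfactors{P^{-1}}\le\numfactors{C^{-1}}$; and by Remark~\ref{existproduct}, $\numfactors{P}=\numfactors{P^{-1}}\le\numfactors{C^{-1}}=\numfactors{C}$.

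I expect the main obstacle to be not the arithmetic but the two structural facts that make it go through cleanly: verifying that $R$ (and likewise $P^{-1}$) is a single section, so that the explicit formula from the proof of Theorem~\ref{solidrepthm} is available verbatim, and correctly tracking the sign reversal under transposition in the reduction for $P$. Once those are settled, the bound $\numfactors{R}\le n-1$ reduces to the single inequality $\columnone{i}{R}\le n-(m-i)$.
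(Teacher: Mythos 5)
Your argument is correct and is essentially the paper's proof: both establish the bound on $\numfactors{R}$ from the explicit count in Equation~\eqref{settledcount}, observing that the positive column values of $R$ are strictly increasing and hence cannot exceed those of the circulant $C$ (the paper phrases this as ``the swap count is maximum for $C$'' with equal delay counts; you phrase it as $\columnone{i}{R}\le n-(m-i)$ so $\numfactors{R}\le n-1=\numfactors{C}$, which is the same fact), and both handle $P$ by passing to $P^{-1}$ and $C^{-1}$ via Remark~\ref{existproduct}. The only difference is that you make explicit two details the paper leaves implicit, namely that $R$ is a single section so the formula applies, and the sign-reversal bookkeeping under inversion.
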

\begin{proof}
	From Theorem~\ref{solidrepthm}, for each row of $R$ and $C$ with the same index, the delay count is the same, but the swap count is maximum for $C$.  So, from Equation~\eqref{settledcount}, $\numfactors{C} \geq \numfactors{R}$.  Since $P^{-1}$ is row-settled and Strang canonical and $C^{-1}$ is circulant, $\numfactors{C} = \numfactors{C^{-1}} \geq \numfactors{P^{-1}} = \numfactors{P}$ by Remark~\ref{existproduct}.
\end{proof}

\section{Overtaking Swaps}\label{overtake}
Theorem~\ref{solidrepessthm}, the main result in this section, establishes that Conjecture~\ref{strangconjmat} holds for settled Strang canonical matrices by a comparison with tracefree matrices
.
\begin{remark}\label{notfixedonly}
	Since the neutral rows of a permutation matrix $P = P_0$ are pairwise contented, if $P_k = B_kP_{k - 1}$ as in Lemma~\ref{algoterm} and all the signed rows of $P$ are neutral in $P_m$, then $P_m = I$.
\end{remark}
\begin{definition}
	An \emph{overtaking swap} of a permutation matrix $P$ is an elementary matrix that swaps an adjacent inverted pair of $P$ where either the upper row is not positive or the lower row is not negative.  The upper positive row or the lower negative row \emph{overtakes} the other row by the swap.
	\begin{align*}
		\text{If } P = \begin{bmatrix}A & B\\C & D\end{bmatrix} & \text{ where }
	A\text{ is } (m - 1) \times (m - 1), \text{ define } \insertfix{m}{P} := \begin{bmatrix}A & 0 & B\\0 & 1 & 0\\C & 0 & D\end{bmatrix}\\
	& \text{ and } \insertfix{m_1, \dots, m_k}{P} := \insertfix{m_k}{\cdots\left(\insertfix{m_1}{P}\right)}.\\
		\text{If } P = \begin{bmatrix}A & 0 & B\\0 & 1 & 0\\C & 0 & D\end{bmatrix} & \text{ where }
	A \text{ is } (m - 1) \times (m - 1), \text{ define } \deletefix{m}{P} := \begin{bmatrix}A & B\\C & D\end{bmatrix}\\
	& \text{ and } \deletefix{m_1, \dots, m_k}{P} := \deletefix{m_k}{\cdots\left(\deletefix{m_1}{P}\right)}.
	\end{align*}
	If $P$ is a nonidentity permutation matrix whose neutral rows have indices $r_1, \dots, r_k$, $r_i > r_{i + 1}$, $\essence{P} := \deletefix{r_1, \dots, r_k}{P}$ is the \emph{essential form} or \emph{essence} of $P$.
\end{definition}
\begin{remark}\label{fixedinserteffect}
	If $P$ is an $n \times n$ permutation matrix and $1 \leq m \leq n$, then $\bandwidth{\insertfix{m}{P}} \leq \bandwidth{P} + 1$.
\end{remark}
\begin{example}\label{transpex}
	To demonstrate the effect of inserting neutral rows on the number of factors, consider $T_1 = \begin{bmatrix}0 & 1\\1 & 0\end{bmatrix}$, the unique bandwidth--1 section.  The factorization of $T_k = \insertfix{2}{T_{k - 1}}$, with matrices containing overtaking swaps underlined
	\ is as follows:
	\begin{align*}
		T_2 & = \insertfix{2}{T_1} = 
		\begin{bmatrix}
			0 & 0 & 1\\
			0 & 1 & 0\\
			1 & 0 & 0
		\end{bmatrix} = 
		\underline{
		\begin{bmatrix}
			0 & 1 & 0\\
			1 & 0 & 0\\
			0 & 0 & 1
		\end{bmatrix}} 
		\begin{bmatrix}
			1 & 0 & 0\\
			0 & 0 & 1\\
			0 & 1 & 0
		\end{bmatrix}
		\begin{bmatrix}
			0 & 1 & 0\\
			1 & 0 & 0\\
			0 & 0 & 1
		\end{bmatrix};\\
		T_3 & = \insertfix{2}{T_2}
		=
		\begin{bmatrix}
			0 & 0 & 0 & 1\\
			0 & 1 & 0 & 0\\
			0 & 0 & 1 & 0\\
			1 & 0 & 0 & 0
		\end{bmatrix}
		 = 
		\underline{
		\begin{bmatrix}
			0 & 1 & 0 & 0\\
			1 & 0 & 0 & 0\\
			0 & 0 & 0 & 1\\
			0 & 0 & 1 & 0
		\end{bmatrix}} 
		\begin{bmatrix}
			1 & 0 & 0 & 0\\
			0 & 0 & 1 & 0\\
			0 & 1 & 0 & 0\\
			0 & 0 & 0 & 1
		\end{bmatrix} 
		\begin{bmatrix}
			0 & 1 & 0 & 0\\
			1 & 0 & 0 & 0\\
			0 & 0 & 0 & 1\\
			0 & 0 & 1 & 0
		\end{bmatrix};\text{ and }\\
		T_4 & = \insertfix{2}{T_3} = 
		\begin{bmatrix}
			0 & 0 & 0 & 0 & 1\\
			0 & 1 & 0 & 0 & 0\\
			0 & 0 & 1 & 0 & 0\\
			0 & 0 & 0 & 1 & 0\\
			1 & 0 & 0 & 0 & 0
		\end{bmatrix}
		= \underline{O_1O_2}R_1R_2R_3,
	\end{align*}
	where $O_1$ swaps the first and last pairs of rows of $T_4$ and $O_2$ swaps the middle row of $O_1T_4$.
\end{example} 
When a neutral row is overtaken, it assumes the sign of the overtaking row.
\begin{theorem}\label{solidrepessthm}
	A settled Strang canonical matrix of bandwidth $w$ can be written as the product of less than $2w$ bandwidth--1 matrices.
\end{theorem}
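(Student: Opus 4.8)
The plan is to bootstrap from the tracefree case (Theorem~\ref{solidrepthm}) by travelling between a settled Strang canonical matrix and its essence along insertions of neutral rows. Two routine reductions come first. Taking an inverse exchanges ``row‑settled'' with ``column‑settled,'' preserves Strang canonicity, leaves $\bandwidth{\cdot}$ unchanged, and (by Remark~\ref{existproduct}) leaves $\numfactors{\cdot}$ unchanged, so I may assume $P$ is row‑settled. It then suffices to prove $\numfactors{P}\le 2\bandwidth{P}-1$ for every row‑settled Strang canonical $P$ (the matrix form of the theorem follows at once, since $\numfactors{\cdot}$ is the maximum over sections and $\bandwidth{P}$ dominates that of each section, and sections of a row‑settled Strang canonical matrix are themselves of this type).

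Let $E:=\essence{P}$. Deleting a neutral row together with its (neutral) column keeps every positive row above every negative row and keeps each pair of positive, and each pair of negative, rows contented — a short check on how row indices and column values shift shows contented pairs stay contented — so $E$ is a \emph{tracefree} row‑settled Strang canonical matrix and $P=\insertfix{m_1,\dots,m_t}{E}$, where $m_1<\dots<m_t$ list the neutral rows of $P$ and the insertions are performed in increasing order. By Theorem~\ref{solidrepthm}, $\numfactors{E}\le 2\bandwidth{E}-1$, and by Remark~\ref{fixedinserteffect} each insertion raises the bandwidth by $0$ or $1$ (it cannot lower it, since no signed row's displacement decreases). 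So everything reduces to controlling the growth of $\numfactors{\cdot}$ under a single insertion.

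The key lemma is: for a row‑settled Strang canonical $Q$ and an insertion position $m$, one has $\numfactors{\insertfix{m}{Q}}\le\numfactors{Q}$ when $\bandwidth{\insertfix{m}{Q}}=\bandwidth{Q}$, and $\numfactors{\insertfix{m}{Q}}\le\numfactors{Q}+2$ in general. (The first case is exactly when the new row lands at a section boundary and merely splits off a trivial section; otherwise it enters the interior of a nontrivial section, which stays a single section — an indecomposable permutation cannot acquire a nontrivial fixed prefix from a new fixed point — and whose bandwidth rises by precisely $1$.) Granting the lemma, telescoping over the $t$ insertions (each contributing at most $2$ to $\numfactors{\cdot}$ for each unit it adds to the bandwidth) gives
\[
\numfactors{P}\;\le\;\numfactors{E}+2\bigl(\bandwidth{P}-\bandwidth{E}\bigr)\;\le\;\bigl(2\bandwidth{E}-1\bigr)+2\bigl(\bandwidth{P}-\bandwidth{E}\bigr)\;=\;2\bandwidth{P}-1,
\]
which is what we want; the column‑settled case follows by the inverse reduction already made.

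The main obstacle is the second clause of the key lemma, $\numfactors{\insertfix{m}{Q}}\le\numfactors{Q}+2$. The plan is to take a ``parallel bubblesort'' factorization of $Q$ of length $\numfactors{Q}$ — each step a product of disjoint adjacent reducing or overtaking swaps — and lift it to a factorization of $\insertfix{m}{Q}$ longer by at most two. Writing $\nu$ for the newly inserted neutral row in slot $m$, one prepends a single bandwidth‑$1$ step in which the signed row that first needs to cross slot $m$ overtakes $\nu$ (nudging $\nu$ one slot aside); then runs the lifted steps of $Q$'s sort, re‑indexed around $\nu$'s current slot, each still a product of \emph{disjoint} adjacent transpositions; and finally appends one bandwidth‑$1$ step dropping $\nu$ into its terminal trivial section. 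The delicate points are that $\nu$ must be displaced only once and restored only once — after the initial overtake it never again obstructs a swap until the end — and that no lifted step ever needs two overlapping transpositions; $T_2=\insertfix{2}{T_1}$ in Example~\ref{transpex} is the prototype ($\numfactors{T_1}=1$, $\numfactors{T_2}=3$). A variant that sidesteps the lifting argues on $\numfactors{\cdot}$ of a single section $S$ directly through the ``swap count plus delay count'' bookkeeping of Theorem~\ref{solidrepthm}, using that a positive (respectively negative) row can immediately overtake a neutral row just below (respectively above) it — so neutral rows add no delay — together with the bound that $S$ has at most $\bandwidth{S}$ positive rows and at most $\bandwidth{S}$ negative rows; either route reaches $2\bandwidth{S}-1$.
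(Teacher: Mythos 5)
Your route through $E=\essence{P}$ and a telescoping bound on insertions is a genuinely different strategy from the paper's (which benchmarks against a circulant $C$ and uses Corollary~\ref{circbound}), but both halves of your key lemma have real problems.

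The characterization in clause~(a) is false: interior insertions into a single nontrivial section can leave the bandwidth unchanged. Take the $10\times 10$ row-settled Strang canonical section $Q$ with $\columnone{}{Q}=(2,10,1,3,4,5,6,7,8,9)$, so $\bandwidth{Q}=8$, achieved only by row $2$ whose $1$ sits at $(2,10)$. Inserting a neutral row at $m=2$ gives $\insertfix{2}{Q}$ with columns $(3,2,11,1,4,5,6,7,8,9,10)$; this is still a single section (check that no proper prefix is invariant), and the critical $1$ has moved to $(3,11)$ with displacement still $8$, while every other displacement stays below $8$. So $\bandwidth{\insertfix{2}{Q}}=\bandwidth{Q}=8$ even though the insertion is interior and the section does not split. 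Your parenthetical (``exactly when the new row lands at a section boundary \dots whose bandwidth rises by precisely $1$'') therefore does not cover this case, and the conclusion $\numfactors{\insertfix{m}{Q}}\le\numfactors{Q}$ is left unproved precisely where the telescoping argument needs it: whenever an insertion step satisfies $\bandwidth{}$-increase $0$ but $\numfactors{}$-increase $>0$, the inequality $\numfactors{P}\le\numfactors{E}+2(\bandwidth{P}-\bandwidth{E})$ collapses. (You do not get to treat the insertion per-section silently here either, because the lemma is invoked on the whole intermediate matrix.)

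The ``variant'' you offer as a fallback is also broken at the same point: the claim that ``neutral rows add no delay'' is contradicted already by $T_2=\insertfix{2}{T_1}$ in Example~\ref{transpex}. There the single positive row has swap count $2$ and delay count $0$, yet $\numfactors{T_2}=3$; the extra unit is exactly the overtake forced by the neutral row, which the swap-plus-delay bookkeeping of Theorem~\ref{solidrepthm} does not account for. Finally, clause~(b), the $+2$ bound under a single insertion, is where almost all of the work lies; your lifting sketch flags the genuinely delicate points (``$\nu$ must be displaced only once and restored only once,'' no overlapping transpositions) but does not resolve them, and these are precisely the case analysis that the paper spends the bulk of its proof on (Steps~1--2 and the three Cases, driven by Remark~\ref{eventight} and Corollary~\ref{circbound}). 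As written, the proposal does not close the argument.
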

\begin{proof}
	To show the result for a settled Strang canonical matrix $P$, a circulant matrix $C$ will be used to determine an upper bound for $\numfactors{P}$.

	Let $C$ be $n \times n$ with $\colindexed{c}{C}$ as its first positive column and $\rowindexed{r}{C}$ as its first negative row.  From Remark~\ref{eventight}, $C = \prod\limits_{k = 1}^{n - 1} R_k$ where $R_k$ is the reducing matrix of $C_{k - 1}$.
		
	For $P = \insertfix{m}{C}$, let $w = \bandwidth{P} = \bandwidth{C} + 1$.  If $m = r$, the neutral row is inserted between the positive and negative rows.  The initial reduction $C_1$ is delayed by an overtake of the neutral row by $\rowindexed{m'}{P}$.  If $2m < n$, the neutral row is closer to $\rowindexed{1}{P}$.  If $2m > n$, it is closer to $\rowindexed{n + 1}{P}$.  So the neutral row is overtaken toward whichever row between $\rowindexed{1}{P}$ and $\rowindexed{n + 1}{P}$ it is closer to, or either if $2m = n$, and $\rowindexed{m'}{P}$ has the sign of $n - 2m$.  Let the overtaking swap be $O$ and $OP$ replace $P$.
	\begin{enumerate}[\text{Step} 1]
		\item If $\rowindexed{r'}{P}$ is first negative row of $P$, then $r' \in \{r, r + 1\}$.  Let $k = \min\{r' - 1, n + 1 - r'\}$ and $e = |r' - k| \in \{1, n + 1\}$.  For the matrices $O_q$ with $1 \leq q \leq k$, if $\columnone{m_q}{P_{q - 1}} = m$, then $O_q = \insertfix{m_q}{R_q}\hat{O}_q$ with $\hat{O}_q = I$ if $P_{q - 1}$ is Strang canonical or $\hat{O}_q$ is the overtaking swap of $\rowindexed{m_q}{P_{q - 1}}$ otherwise.
		\item $P_k$ is Strang canonical and $\rowindexed{e}{P_k}$ is neutral.  For $q > k$, $O_q$ can be determined by showing that $P_k = \insertfix{e}{\hat{C}_{k - 1}}$ where:
		\begin{enumerate}[\text{Case} 1]
			\item If $\rowindexed{m}{P_k}$ is neutral, then $P_k = \insertfix{e, e}{\bar{C}_{k - 1}}$, so $\hat{C} = \insertfix{e}{\bar{C}}$, where 
$\bar{C}$ is an $n - 1 \times n - 1$ circulant matrix
.
			\item If $m$ is between $c$ and $r$ or $m = c$, then $P_k = \insertfix{e}{C_{k - 1}}$, so $\hat{C} = C$.
			\item Otherwise, 
$\hat{C}$ is a row-settled Strang canonical matrix which is row-sign-equivalent to $C$
.  
%
		\end{enumerate}
		Then, for $k \leq q \leq \numfactors{\hat{C}}$, $O_{q + 1} = \insertfix{e}{\hat{R}_q}$, where $\hat{R}_q$ is the reducing matrix of $\hat{C}_{q - 1}$ and, from Corollary~\ref{circbound}, $\numfactors{P} = \numfactors{\hat{C}} + 1 \leq n + 1$.
	\end{enumerate}
	Upon the completion of the above steps, it can be determined that $P = \prod\limits_{k = 1}^q \bar{O}_k$ where $\bandwidth{\bar{O}_k} = 1$.  If $m = r = c$, then $n = 2(m - 1)$ and $\bandwidth{C} = m - 1$.  Thus, $\bandwidth{P} = m = w$, $q = n + 1 = 2w - 1$, $\bar{O}_1 = O$ and $\bar{O}_{k + 1} = O_k$ for $1 \leq q \leq n$.  Otherwise, $q \in \{n - 1, n\}$ and, by Remark~\ref{eventight}, $2(w - 1) - 1 \geq n - 1$ making $2w - 1 \geq n + 1 > q$.  Therefore, for $P = \insertfix{m}{C}$, $\numfactors{P} < 2w$, and this bound is tight only when $m = \frac{n}{2} + 1$.  
	
	As seen in Example~\ref{transpex}, the parity of the number of neutral rows inserted as a block, say $P = \insertfix{m, \dots, m}{C}$, whether it is an odd or an even number, may affect $\numfactors{P}$ differently.  In particular, when $m = \frac{n}{2} + 1$, $\numfactors{\insertfix{m, m}{C}} = \numfactors{\insertfix{m}{C}}$ and $\bandwidth{\insertfix{m, m}{C}} = \bandwidth{\insertfix{m}{C}} + 1$.  For $m_1, \dots, m_\ell$ such that $1 < m_{i + 1} < m_i \leq n$, $$\numfactors{\insertfix{m_1, \dots, m_\ell}{C}} - \numfactors{C} = \sum\limits_{i = 1}^\ell \left(\numfactors{\insertfix{m_i}{C}} - \numfactors{C}\right).$$  

	Multiple blocks of neutral rows inserted to produce $P$ occasionally add a single bandwidth--1 factor, whenever $P_q$ contains a neutral row between rows of the opposite sign, such as, if $r' > r$, for $\insertfix{r, r + 2, r + 2, r + 2}{C}$ and for $\insertfix{r' - r, \dots, r' - r}{C}$ where there are $r + 1$ neutral rows inserted.  
		
	Therefore, for any circulant nonidentity permutation matrix $C$, given the class $\mathcal{C}_C = \{P : \essence{P} = C\}$, if 
	$d_P = 2\bandwidth{P} - 1 - \numfactors{P}$, then $d_C \geq 0$, $d_C = d_P$ when $C$ is a $2m \times 2m$ matrix and $P = \insertfix{m + 1}{C}$, otherwise $d_P > d_C$ whenever $P \neq C$,.

	Finally, if $R$ is a row-settled Strang canonical matrix which is row-sign-equivalent to $C$, from Corollary~\ref{circbound} and by following the previous arguments, for every set $\{m_1, \dots, m_t\}$, $\bandwidth{\insertfix{m_1, \dots, m_t}{R}} = \bandwidth{\insertfix{m_1, \dots, m_t}{C}}$ and $\numfactors{\insertfix{m_1, \dots, m_t}{R}} \leq \numfactors{\insertfix{m_1, \dots, m_t}{C}}$.  The argument holds for column-settled Strang canonical $R^{-1}$, and the conclusion follows.
\end{proof}
\begin{remark}\label{fixedlowbound}
	If $P$ is a settled Strang canonical matrix with $\bandwidth{P} = w$ and $f$ neutral rows, then $\numfactors{P} < 2w - f$, by the proof of Theorem~\ref{solidrepessthm}, noting the tight-bound exception.
\end{remark}

\section{Opportunistic Overtaking}\label{proveconj}
The main result of this section is the completion of the proof of Conjecture~\ref{strangconjmat} with opportunistic-overtaking matrices---a greedy generalization of reducing matrices---along the construction used in Theorem~\ref{solidrepessthm}
.
\begin{definition}
	If $P$ is a permutation matrix, then $\invertrows{m}{P}$ is the minimal submatrix containing only consecutive rows of $P$ such that $\rowindexed{m}{P}$ and all of the rows of $P$ that are pairwise inverted with $\rowindexed{m}{P}$ are in $\invertrows{m}{P}$.

	An \emph{inverted block} of a permutation matrix is a maximal submatrix containing only consecutive rows such that all the rows are pairwise inverted.
	
	An \emph{opportunistic-overtaking matrix} $O$ of a permutation matrix $P$ is the product of the reducing matrix of $P$ and the overtaking swaps of $P$ such that, for every inverted block of $P$, the only rows that $O$ can leave unswapped are the first and the last rows of the block.  The collection of all products of $P$ with any of its opportunistic-overtaking matrices $O$ is denoted by $\oomat{P} \ni OP$.

	Given a signed $\rowindexed{m}{P}$, \emph{removing its sign} produces a matrix $P' := \unsignrow{m}{P}$.  If $m'$ is such that $\columnone{m'}{P} = m$, then: $\rowindexed{k}{P'} = \rowindexed{k}{P}$ when $k \neq m, m'$; $\rowindexed{m'}{P'} = \rowindexed{m}{P}$; and $\rowindexed{m}{P'}$ is neutral.
\end{definition}	
For every $\rowindexed{m}{P}$ not in a trivial section, the top row of $\invertrows{m}{P}$ is positive and the bottom row of $\invertrows{m}{P}$ is negative.  If $P$ is upper-canonical and $\rowindexed{m}{P}$ is positive, it is the top row of $\invertrows{m}{P}$.  If $P$ is lower-canonical and $\rowindexed{m}{P}$ is negative, it is the bottom row of $\invertrows{m}{P}$.

If the rows of an $n \times n$ permutation matrix $P$, from $\rowindexed{i}{P}$ to $\rowindexed{j}{P}$, form an inverted block, then
\begin{itemize}
	\item either $i = 1$ or $\rowindexed{i - 1}{P}$ and $\rowindexed{i}{P}$ are a contented pair, and
	\item either $j = n$ or $\rowindexed{j}{P}$ and $\rowindexed{j + 1}{P}$ are a contented pair.
\end{itemize}

If $P$ is a nonidentity permutation matrix and $O$ is any of its opportunistic-overtaking matrices, then $\bandwidth{O} = 1$ and $O$ satisfies the ``parallel bubblesort'' condition of $B_k$ from Lemma~\ref{algoterm}, while providing a locally-optimal, i.e. greedy, condition to determine the next bandwidth--1 factor, in that $P$ and $OP \in \oomat{P}$ share no inverted pairs.

The only inverted blocks that a Strang canonical matrix has are reducible pairs and neutral rows with a positive and/or a negative row to overtake it.  The product of any permutation matrix and any of its opportunistic-overtaking matrices is a permutation matrix whose inverted blocks have no more than three rows.

A permutation matrix $P$ always has a unique reducing matrix.  $P$ has a unique opportunistic-overtaking matrix only if each inverted block of $P$ that has more than two rows has a reducible pair, otherwise that block can have two choices of overtaking swaps. 

An algorithm for determining an opportunistc-overtaking matrix of $P$ is given in the Appendix.
\begin{example}
	If, as in Theorem~\ref{solidrepthm} and the sections in Lemma~\ref{canonderangeredlemma}, $P$ is Strang canonical and $\tracemat{P} = 0$, then its reducible pairs are inverted blocks and $\oomat{P} = \{\reducemat{P}\}$.  In the proof of Theorem~\ref{solidrepessthm}, $P_q \in \oomat{P_{q - 1}}$
	.
\end{example}
\begin{theorem}\label{strangthm}
	A permutation matrix of bandwidth $w$ can be written as the product of less than $2w$ bandwidth--1 permutation matrices.
\end{theorem}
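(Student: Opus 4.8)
The plan is to run the greedy ``parallel bubblesort'' of Lemma~\ref{algoterm} all the way down to the identity, choosing an opportunistic-overtaking matrix at every step, and to bound the number of iterations by reduction to the settled Strang canonical case of Theorem~\ref{solidrepessthm}. Since $\numfactors{P} = \max\{\numfactors{S_1},\dots,\numfactors{S_k}\}$ over the sections of $P$ and each section has bandwidth at most $w$, I would first reduce to a single nontrivial section $S$. Then, as in the proof of Theorem~\ref{solidrepessthm}, I would strip the neutral rows and argue about $\essence{S}$, reinstating them afterwards through $\insertfix{\cdot}{\cdot}$, Remark~\ref{fixedinserteffect}, and the additive/tight-bound analysis of neutral-row insertions developed there (see also Remark~\ref{fixedlowbound}); so it suffices to prove the bound for a tracefree single section $S$.

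For such an $S$, put $S_0 = S$ and $S_k = O_k S_{k-1}$ with $O_k \in \oomat{S_{k-1}}$. Each $O_k$ is a nonidentity bandwidth--1 matrix swapping only inverted pairs, so Lemma~\ref{algoterm} gives $S_m = I$ for some $m$, hence $\numfactors{S} \le m$, and it remains to prove $m < 2\bandwidth{S}$. The core is a domination argument in the spirit of Corollary~\ref{circbound} and the construction in Theorem~\ref{solidrepessthm}: one shows that the opportunistic-overtaking process on $S$ never runs longer than the reducing process on a circulant built from the row- and column-sign data of $S$, a circulant that is necessarily of size at most $2\bandwidth{S}$ by Remark~\ref{eventight}. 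The bookkeeping is the one from Theorem~\ref{solidrepthm}: each row carries a ``swap count'' (its remaining distance to home, bounded by $\bandwidth{S}$) and a ``delay count'' (the number of like-signed rows that must move before it first moves). Because the opportunistic-overtaking rule clears every inverted block except possibly its two extreme rows, so that $S_{k-1}$ and $S_k = O_kS_{k-1}$ share no inverted pair and the inverted blocks remain short (at most three rows), the delays incurred in $S$ cannot exceed those forced on the comparison circulant; with \eqref{settledcount}, Remark~\ref{eventight} and Theorem~\ref{solidrepessthm} this yields $m \le 2\bandwidth{S} - 1 < 2\bandwidth{S} \le 2w$.

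I expect the domination step to be the main obstacle, for two intertwined reasons. First, $S$ cannot be matched to a single equal-sized circulant --- a section of bandwidth $w$ may have many more than $2w$ rows --- so the passage to the circulant bound must be \emph{local}: by bandwidth $w$ the rows inverted with a given $\rowindexed{m}{S}$ all lie in the small window $\invertrows{m}{S}$, and one must localize the potential to each such window, compare it with a circulant having at most $2w$ rows, and patch the local estimates together. Second, inverted blocks of three or more rows --- which a Strang canonical matrix never has --- admit two choices of overtaking swap and may linger as three-row blocks; one must show that some legal choice keeps every row ``on schedule'', presumably by an exchange or induction argument on the length of the longest inverted block, while also controlling the transient bandwidth growth an overtaking swap can cause. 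Once this coupling is in place, the remaining verifications --- that the neutral-row insertion analysis of Theorem~\ref{solidrepessthm} survives the loss of circulant and Strang-canonical structure, the tight-bound exception, and the passage from a single section back to $P$ --- are routine given Theorems~\ref{solidrepthm} and~\ref{solidrepessthm}, Corollary~\ref{circbound}, and Remarks~\ref{eventight} and~\ref{fixedlowbound}.
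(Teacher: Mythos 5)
Your high-level strategy --- run opportunistic-overtaking, localize to the windows $\invertrows{m}{S}$, and dominate by a circulant of size at most $2w$ using the swap-count/delay-count bookkeeping --- is the right family of ideas, and in particular you correctly anticipate that the argument must become local (the paper does not compare the whole matrix to one circulant). But the proposal leaves the central step as an acknowledged obstacle rather than a resolved argument, and the reduction it does carry out is not the one the paper uses and is not obviously sound.

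Concretely, two issues. First, you propose to strip all neutral rows and argue about $\essence{S}$ before reinstating them via $\insertfix{\cdot}{\cdot}$. But a general permutation matrix's essence is merely tracefree --- it need not be settled or Strang canonical --- so Lemma~\ref{canonderangeredlemma}, Theorem~\ref{solidrepthm}, and the $\insertfix{\cdot}{\cdot}$ bookkeeping of Theorem~\ref{solidrepessthm} (which is proved only for circulants and their row-/column-sign-equivalent settled Strang canonical companions) do not apply to $\essence{S}$. The paper instead works \emph{row by row}: first treating a lower-canonical $P$, and for each positive $\rowindexed{m}{P}$ it forms $\tilde{P}^m$ by removing the signs only of the positive rows that $\rowindexed{m}{P}$ overtakes, then localizes to a matrix $P^m$ that actually is column-settled Strang canonical. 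Theorem~\ref{solidrepessthm} applies to $P^m$, not to any global essence, and the per-row bounds $q_m$ are then collected via $\numfactors{P}=\max_m q_m$. Second, and more seriously, you flag the case of inverted blocks of three or more rows and of rows that get overtaken as ``the main obstacle'' but do not supply the argument. This is precisely the paper's Case~2: if $\rowindexed{m}{P}$ is overtaken, one either ends with the overtaking row (so $q_m\le q_{r'}$) or must bound the additional delay $\rowindexed{m}{P}$ inherits from trailing the last overtaking row $\rowindexed{r_k}{P_k}$, which is at most $f_{r_k}$, giving $q_m\le q_{r_k}+f_{r_k}<2w$. Without this estimate the ``domination'' you appeal to is unproved. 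Finally, the paper closes by passing from lower-canonical to upper-canonical via inverses (Remark~\ref{existproduct}) and then to arbitrary $P$ by substituting a column-settled upper-canonical surrogate for $P^m$; your reduction to a tracefree single section skips this canonicity layering entirely. As written, the proposal is a plausible outline that names the hard steps but does not prove them.
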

\begin{proof}[Proof of Conjecture~\ref{strangconjmat}]
	The proof will relax the conditions on the permutation matrix and prove that the conjecture holds for each relaxation.

	Let $P$ be a lower-canonical matrix with $\bandwidth{P} = w$ and $P = \prod\limits_{k = 1}^q O_k$ where $O_kP_{k - 1} = P_k \in \oomat{P_{k - 1}}$.  
A negative row of $P$ will be in a trivial section in some $P_k$ only by being swapped by $O_k$ with a positive row of $P$.  So, let $\rowindexed{m}{P}$ be positive.
	\begin{enumerate}[\text{Case} 1]
		\item If $\rowindexed{m}{P} = \rowindexed{m_k}{P_k}$ is never overtaken in $\{O_k\}$, the plan is to localize the determination of the swaps that move $\rowindexed{m}{P}$ in $\{O_k\}$.  
		
		First, determine $\tilde{P}^m$ such that, through removing the signs of the positive rows that are overtaken by $\rowindexed{m}{P}$.  $\tilde{P}^m$ has no such positive row.  
		
		Next, determine the matrix $P^m$ localizing to the swaps of $\rowindexed{m}{P}$ and the rows that are inverted with it.  
		Then, $P^m$ is column-settled and Strang canonical whose only nontrivial section is from $\rowindexed{m}{P^m}$ to $\rowindexed{\ell_m}{P^m}$, which is row-sign-equivalent to $\invertrows{m}{\tilde{P}^m}$.
		
		If, by Theorem~\ref{solidrepessthm}, $P^m = \prod\limits_{k = 1}^{\bar{q}_m} T^m_k$, with $\hat{T}^m_k$ the possibly identity elementary matrix performing the swap in $T^m_k$ that moves $\rowindexed{m_k}{P^m_k} = \rowindexed{m}{P^m}$, can be performed on $\rowindexed{m}{\tilde{P}^m}$, and thus can also be performed on $\rowindexed{m}{P}$.  
		\label{overtakecase}
		
		There are two scenarios to consider:
		\begin{enumerate}[\text{Sub-Case} 1]
			\item Assume that there is a reducible pair in $\invertrows{m}{P}$ that is not in $\invertrows{m}{\tilde{P}^m}$.  Since both rows have their signs removed, their reducing swap will be in $O_1$, and $\rowindexed{m}{P}$ will be in a trivial section in $P_{\bar{q}_m + 1}$.  By Remark~\ref{fixedlowbound}, $q_m = \bar{q}_m + 1 < 2w - f_m + 1$ with $f_m > 1$.
			\item Otherwise, $\rowindexed{m}{P}$ will be in a trivial section in $P_{\bar{q}_m}$.  By Remark~\ref{fixedlowbound}, $q_m = \bar{q}_m < 2w - f_m$.\label{canonicalalwayssubcase}
		\end{enumerate}
		\item Let the $m$th row be overtaken in $\{O_k\}$.  From the previous case
		, for each $\rowindexed{r'}{P}$ overtaking $\rowindexed{m}{P}$ in $\{O_k\}$, $\rowindexed{r'}{P}$ is in a trivial section in $P_{q_{r'}}$ with $q_{r'} < 2w - f_{r'}$ where $f_{r'}$ is the number of nonnegative rows in $\invertrows{r'}{P}$.  Again, there are two scenarios to consider:
		\begin{enumerate}[\text{Sub-Case} 1] 
			\item If the final swap of $\rowindexed{m}{P}$ in $\{O_k\}$ is with one of the rows overtaking it, say $\rowindexed{r'}{P}$, then $\rowindexed{m}{P}$ is in a trivial section in $P_{q_m}$, where $q_m \leq q_{r'} < 2w - f_{r'}$.  
			\item Otherwise, 
			$\rowindexed{m}{P}$ is positive just before it is in a trivial section, and all rows that can overtake it have overtaken it before it is swapped into a trivial section.  Thus, after the last row overtakes it in $P_k$, $\rowindexed{m}{P} = \rowindexed{m_k}{P_k}$ is above its overtaking row $\rowindexed{r_k}{P_k}$, and once $\rowindexed{r_k}{P_k}$ swaps with a row below it, $\rowindexed{m}{P}$ can swap with the row it was overtaken by unless it was first overtaken by $\rowindexed{r_k}{P_k}$ and is contented with $\rowindexed{m}{P}$.  Then $\rowindexed{m}{P}$ has a delay count trailing $\rowindexed{r_k}{P_k}$ of at most $f_{r_k}$ and $\rowindexed{m}{P}$ is in a trivial section in $P_{q_m}$, where $q_m \leq q_{r_k} + f_{r_k} < 2w$.
		\end{enumerate}
	\end{enumerate}
	Since $\numfactors{P} = q = \max_{\columnone{m}{P} \neq m} q_m$, then $q < 2w$ and the conjecture holds for lower-canonical matrices.  Since an upper-canonical matrix is the inverse of a lower-canonical matrix, the same conclusion follows from Remark~\ref{existproduct}.
	
	If $P$ is not half-canonical, $P^m$ can be replaced in Case~\ref{overtakecase} by a column-settled upper-canonical matrix, where the same negative rows of $P^m$ and $P$ constitute an inverted pair, and the results will similarly follow.
\end{proof}
If, in the above proof, $P$ is Strang canonical, then only Sub-Case~2
\ of Case~\ref{overtakecase} holds for each signed row $\rowindexed{m}{P}$.

\section{Further Points for Analysis}\label{furtherpoints}
	Panova \cite{Panova} proved Conjecture~\ref{strangconjmat} through the use of wiring diagrams.  This approach is similar to determining \emph{multi-braids}
	.  From braid theory, by the Artin relations \cite[Eq.~18, 19]{Braid}, braids that do not share a thread commute: here, any number of commuting braids can be combined, without ambiguity, into a single multi-braid.  It is of interest to compare the factors derived from the approach in \cite{Panova}, as with the method of Albert, Li and Yu \cite[Sec.~4]{Strang}, which is not yet readily available, with the opportunistic-overtaking matrices approach.
\begin{definition}
	The \emph{distance table} of an $n \times n$ permutation matrix $P$ is $\disttable{P} := (P - I)\vec{x}$ where $\vec{x}' = [1\ \dots\ n]$.
\end{definition}
\begin{remark}\label{totswapcount}
	The total number of reducing and overtaking swaps in the factors of a permutation matrix is the number of inverted pairs of that matrix.  This number is also half the sum of absolute values of the entries of its distance table, plus the number of rows that can overtake each signed row and half the number of rows that can overtake each neutral row.
	
	Given $P = \prod\limits_{k = 1}^m B_k$, as in Lemma~\ref{algoterm}, if the distance tables are taken as sequences, then, for the following standard norms,
	\begin{align*}
		                       \norm[\ell_1]{\disttable{P_k}} & \leq \norm[\ell_1]{\disttable{P_{k - 1}}},\\
		\bandwidth{P_k} = \norm[\ell_\infty]{\disttable{P_k}} & \leq \norm[\ell_\infty]{\disttable{P_{k - 1}}} = \bandwidth{P_{k - 1}},\\
		                       \norm[\ell_2]{\disttable{P_k}} & < \norm[\ell_2]{\disttable{P_{k - 1}}},
	\end{align*}
	indicating that the Manhattan and Chebychev distances cannot increase and that the Euclidean distance always decreases
	.
\end{remark}

The previous remark indicates that the subproducts $P_k$ of a given permutation matrix $P$ are ``diffusions'' of the initial state $\disttable{P}$, where a ``parallel bubblesort'' iteration is performed in each ``time-step''.  This may be better analyzed if a relevant basis can be found.

\begin{definition}
	A \emph{greedy bubble matrix} $G$ of a permutation matrix $P$ is a product of reducing and overtaking swaps of $P$ such that $P$ and $GP$ have no common inverted pairs.  The collection of all products of $P$ with any of its greedy bubble matrices $G$ is denoted by $\greedymat{P} \ni GP$.

	An \emph{optimal factorization} of a permutation matrix $P$ is $\prod\limits_{k = 1}^m T_k = P$ where $\bandwidth{T_k} = 1$ and each other factorization of $P$ into bandwidth--1 matrices cannot have less factors than $\opfactors{P} := m$.  
\end{definition}
$\oomat{P} \subseteq \greedymat{P}$, but a greedy bubble matrix of $P$ need not include reducing swaps of $P$.

A breadth-first spanning-tree algorithm \cite[Sec.~22.2]{Algo} rooted in the identity matrix applied to the Cayley graph of the symmetric group of length $n$, corresponding to set of $n \times n$ permutation matrices, 
whose connection set is the set of all permutations represented by bandwidth--1 matrices \cite{Cayley} can be used to determine $\opfactors{P}$ for any $n \times n$ permutation matrix $P$.  
\begin{remark}
	The number of $n \times n$ permutation matrices of bandwidth $w$, $w \leq 1$, is the $n$th Fibonacci number, $F_n$, where $F_0 = F_1 = 1$.
\end{remark}
In testing $n \times n$, $n \leq 9$, permutation matrices, the following were observed for every permutation matrix $P$: there is an optimal factorization $P = \prod\limits_{k = 1}^m G_k$, such that $P_k \in \greedymat{P_{k - 1}}$ and $\opfactors{P} \leq n$.

The former observation suggests that a greedy algorithm \cite[Ch.~16]{Algo} can determine $\opfactors{P}$;  Remark~\ref{totswapcount} indicates that the use of greedy bubble matrices is advantageous
.  Further observation leads to the following conjecture:

\begin{conjecture}\label{greedyconj}
	A finite permutation matrix of bandwidth $w > 0$ is the product of less than $2w$ greedy bubble matrices.
\end{conjecture}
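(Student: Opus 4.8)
The plan is to follow the proof of Theorem~\ref{strangthm}, upgrading it from the canonical choice of opportunistic-overtaking matrices to an arbitrary greedy schedule $P = \prod_{k=1}^{q} O_k$ with $O_k P_{k-1} = P_k \in \greedymat{P_{k-1}}$. Since $\oomat{P} \subseteq \greedymat{P}$, the proof of Theorem~\ref{strangthm} already exhibits one greedy schedule of length $q < 2w$, so the existence form of the statement is not the point; the substance is that \emph{no} greedy schedule, however wastefully it exploits the extra latitude of a greedy bubble matrix --- postponing an available reducing swap, taking either overtaking swap in a three-row inverted block, or using greedy factors that are not of bandwidth one --- can require $2w$ or more factors. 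As in the earlier results, I would first reduce to a single section, since greedy bubble matrices act section-wise and $\numfactors{\cdot}$ is the maximum over sections, and then pass to the essence, since Remark~\ref{fixedinserteffect} and the insertion/deletion bookkeeping underlying Theorem~\ref{solidrepessthm} account for the contribution of neutral rows.

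Next I would localize to each signed row exactly as in the proof of Theorem~\ref{strangthm}: for the fixed greedy schedule, let $q_m$ be the step at which $\rowindexed{m}{P}$ first lies in a trivial section, so that $q = \max_{\columnone{m}{P} \neq m} q_m$, and aim to show $q_m \leq 2w - 1$ for every $m$. The decomposition of $q_m$ into a \emph{swap count} and a \emph{delay count}, as in Equation~\eqref{settledcount} and the discussion following it, persists: the swap count is at most $w$ by the definition of bandwidth and cannot be inflated by greediness, so the whole question is a bound on the delay count. The structural inputs to reuse are that the top and bottom rows of $\invertrows{m}{P}$ are respectively positive and negative; that $\rowindexed{m}{P}$ can be delayed only behind rows of its own sign or behind a row that has overtaken it; and --- as the replacement for Corollary~\ref{circbound} --- that a greedy step leaves $P_{k-1}$ and $P_k$ with no common inverted pair, so once $\rowindexed{m}{P}$ is the extreme row of its inverted block it moves at the very next step. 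Recombining these through the Case~1/Case~2 analysis of the proof of Theorem~\ref{strangthm}, with the usual count of at most $w-1$ same-sign rows ahead plus at most $w-1$ opposite-sign rows to clear, should again give $2w - 1$, and Remark~\ref{fixedlowbound} should absorb the neutral-row slack.

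The main obstacle is precisely where the proof of Theorem~\ref{strangthm} uses the canonical choice: Corollary~\ref{circbound} and the circulant comparison $\numfactors{\hat{C}} \leq n$ depend on opportunistic-overtaking matrices performing \emph{all} available reducing swaps and overtaking a neutral row in one predictable direction, so that the localized matrix $P^m$ is again settled Strang canonical and hence dominated by a circulant step for step. A general greedy bubble matrix may defer a reducing swap to a later layer or split a three-row block the ``wrong'' way, and it is not obvious that the localized process stays comparable to a circulant one --- a priori such choices could accumulate delay. The natural way to close the gap is a monotonicity argument on distance tables: Remark~\ref{totswapcount} already records that $\norm[\ell_1]{\disttable{\cdot}}$ and $\bandwidth{\cdot} = \norm[\ell_\infty]{\disttable{\cdot}}$ are non-increasing and $\norm[\ell_2]{\disttable{\cdot}}$ strictly decreasing along any such schedule, and one would want to strengthen this to a majorization statement forcing the row attaining the bandwidth to make genuine progress at each step, so that the bandwidth cannot persist for $w$ or more steps in either direction. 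Proving such a domination --- or, failing that, ruling out a hypothetical $2w$-step greedy schedule by an exchange argument on a minimal counterexample --- is where the real difficulty lies; the data reported above for $n \leq 9$ is consistent with the conjecture but offers no structural lever beyond the monotonicity already noted.
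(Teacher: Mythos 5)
The paper does not prove this statement: it is presented and left as Conjecture~\ref{greedyconj}, introduced with ``Further observation leads to the following conjecture,'' and the only supporting evidence offered is the exhaustive test on $n \times n$ permutation matrices for $n \leq 9$. There is therefore no proof in the paper to compare against, and your proposal --- which explicitly admits in its final paragraph that it does not close the argument --- is, in effect, in the same position the authors are.

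That said, your diagnosis of the obstruction is accurate and is essentially the reason the paper stops at a conjecture rather than a theorem. You correctly identify that the proof of Theorem~\ref{strangthm} passes through Corollary~\ref{circbound} and the step-by-step circulant comparison, and that both rely on the canonical behaviour of opportunistic-overtaking matrices: every available reducing swap is taken, a neutral row is overtaken in the predictable direction, and the localized matrices $P^m$ stay settled and Strang canonical so that the delay and swap counts from Equation~\eqref{settledcount} are dominated by a circulant. A general greedy bubble matrix may omit an available reducing swap or resolve a three-row inverted block either way, so this domination breaks, and the monotonicity facts from Remark~\ref{totswapcount} (non-increase of $\norm[\ell_1]{\disttable{\cdot}}$ and $\norm[\ell_\infty]{\disttable{\cdot}}$, strict decrease of $\norm[\ell_2]{\disttable{\cdot}}$) are too weak on their own to bound the delay count --- exactly the gap you flag. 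One small point: since a greedy bubble matrix is by definition a product of (pairwise disjoint) reducing and overtaking swaps of $P$, it is automatically a bandwidth--1 permutation matrix; the extra latitude it offers over an opportunistic-overtaking matrix lies only in \emph{which} adjacent swaps it performs, not in their bandwidth, so your parenthetical worry about ``greedy factors that are not of bandwidth one'' does not arise.
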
  
Conjecture~\ref{greedyconj} asserts that, if $P_0 = P$ and $P_k \in \greedymat{P_{k - 1}}$, then, for some $m < 2w$, $P_m = I$.

Of the tested greedy algorithms on $n \times n$ permutation matrices, $n \leq 9$, $\numfactors{P} \leq \opfactors{P} + \lfloor n / 3\rfloor$.

The latter observation seems provable from Theorem~\ref{strangthm} where, if $P$ is a permutation matrix, $\bandwidth{P} = w$, then for every signed $\rowindexed{m}{P}$, $\invertrows{m}{P}$ has at most $2w$ rows.  A.~M.~Bruckstein suggests using the sequence of adjacent transpositions to exhaustively generate all permutations of a given length, such the (Steinhaus-)Johnson-Trotter algorithm \cite{SJT}, as suggested in \cite{Even} or in \cite[Table~5]{Thesis}, and the Artin relations \cite{Braid}.

D.~Pasechnik suggests that the conjecture does not hold for infinite matrices. 

\section*{Acknowledgements}
The authors would like to thank Alfred Bruckstein and Fr\'{e}d\'{e}rique Oggier for suggesting the use of matrices instead of permutations and their supervision in the editing and reading of the drafts, Dmitrii Pasechnik and Radu Stancu for suggestions that were used in the permutation-oriented drafts, and Li-Lian Wang and Gilbert Strang for their advice and support.

\section*{Appendix: Opportunistic-Overtaking Matrix Algorithm}
Given: a permutation matrix $P$\\
Output: an opportunistic-overtaking matrix $O$ of $P$
\begin{itemize}
	\item Initialize $O = I$ and determine the inverted blocks of $P$, $B_1$, $B_2$, \dots, $B_k$
	\item For each inverted block of $P$, $B_i$, $1 \leq i \leq k$
	\begin{itemize}
		\item If $B_i$ has a reducible pair, $\rowindexed{m}{P}$ and $\rowindexed{m + 1}{P}$:
		\item[true:] While $\rowindexed{m - 2}{P}$ is in $B_i$, set $m$ to $m - 2$
		\item[false:] Let $\rowindexed{m}{P}$ be the top row of $B_i$
		\item While $\rowindexed{m + 1}{P}$ is in $B_i$, swap $\rowindexed{m}{O}$ and $\rowindexed{m + 1}{O}$, then set $m$ as $m + 2$
	\end{itemize}
\end{itemize}


\end{document}